\newtheorem{theorem}{Theorem}
\newtheorem{lemma}[theorem]{Lemma}
\theoremstyle{definition}
\newtheorem{definition}[theorem]{Definition}
\theoremstyle{remark}
\numberwithin{equation}{section}
\begin{document}
\title{Generalized $(m,n)$-Jordan centralizers and derivations on semiprime rings}


\author{Arindam Ghosh}
\address{Department of Mathematics, Indian Institute of Technology Patna, Patna-801106}
\curraddr{}
\email{E-mail: arindam.pma14@iitp.ac.in}
\thanks{}

\author{Om Prakash$^{\star}$}
\address{Department of Mathematics, Indian Institute of Technology Patna, Patna-801106}
\curraddr{}
\email{om@iitp.ac.in}
\thanks{* Corresponding author}

\subjclass[2010]{16N60, 16W25, 39B05.}

\keywords{prime ring, semiprime ring, $(m; n)$-Jordan centralizer, generalized $(m; n)$-Jordan centralizer, $(m; n)$-Jordan derivation, generalized $(m; n)$-Jordan derivation.}

\date{}

\dedicatory{}

\begin{abstract}
In this article, we prove Conjecture $1$ posed in 2013 by Fo$\check{s}$ner \cite{fos} and Conjecture $1$ posed in 2014 by Ali and Fo$\check{s}$ner \cite{ali} related to generalized $(m,n)$-Jordan centralizer and derivation respectively.
\end{abstract}

\maketitle

Throughout the paper $R$ represents an associative ring, $Z(R)$ the center of the ring $R$. A ring $R$ is said to be a prime ring if $aRb=0$ for some $a,b\in R$ implies either $a=0$ or $b=0$ and is said to be a semiprime ring if $aRa=0$ for some $a\in R$ implies $a=0$. The ring $R$ is $n$-torsion free if $na=0$ for some $a\in R$ implies $a=0$, where $n\geq 2$ is an integer. An additive map $T:R\rightarrow R$ is a left (right) centralizer if $T(xy)=T(x)y$ ($T(xy)=xT(y)$), for all $x,y\in R$. If $R$ has unity $1\neq 0$ and $T:R\rightarrow R$ is a left (right) centralizer, then $T(x)=T(1)x~(T(x)=xT(1))$, for all $x\in R$. An additive map $T:R\rightarrow R$ is a two-sided centralizer if $T(xy)=T(x)y=xT(y)$, for all $x,y\in R$. Also, an additive map $T:R\rightarrow R$ is said to be a left (right) Jordan centralizer if $T(x^2)=T(x)x$ ($T(x^2)=xT(x)$), for all $x\in R$. We denote $[x,y]$ by $xy-yx$. A mapping $T:R\rightarrow R$ is said to be centralizing on $R$ if $[T(x),x]\in Z(R)$, for all $x\in R$ and is said to be commuting on $R$ if $[T(x),x]=0$, for all $x\in R$.\\

\begin{lemma}
[\cite{vukma}, Lemma 1]
\label{new2}
Let $R$ be a semiprime ring and $axb + bxc = 0$, for all $x \in R$ and some $a, b, c \in R$. Then $(a + c)xb = 0$, for all $x \in R$.
\end{lemma}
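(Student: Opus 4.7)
The strategy is to prove $(a+c)xb \cdot R \cdot (a+c)xb = 0$ for each fixed $x \in R$, from which semiprimeness will give $(a+c)xb = 0$. The key intermediate identity is $bR(a+c)Rb = 0$.

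To derive this, I substitute $x \mapsto xby$ in the hypothesis, obtaining $axbyb + bxbyc = 0$. The hypothesis now simplifies each summand in a different way: rewriting the leftmost substring via $axb = -bxc$ gives $axbyb = -bxcyb$, while rewriting the rightmost substring via $byc = -ayb$ gives $bxbyc = -bxayb$. Adding and regrouping produces $-bx(a+c)yb = 0$, and since $x, y \in R$ are arbitrary, we conclude $bR(a+c)Rb = 0$.

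For the main step, expand $(a+c)xb \cdot r \cdot (a+c)xb$ for arbitrary $r \in R$, rewriting the first factor as $cxb - bxc$ (via $axb = -bxc$). One resulting summand is $bxc \cdot r \cdot (a+c)xb = b(xcr)(a+c)(x)b \in bR(a+c)Rb$. The other is $cxb \cdot r \cdot (a+c)xb = cx \cdot \bigl[br(a+c)xb\bigr]$, which is $cx$ times an element of $bR(a+c)Rb$. Both vanish, so $(a+c)xb \cdot R \cdot (a+c)xb = 0$, and the semiprime hypothesis yields $(a+c)xb = 0$.

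The main obstacle is locating the nontautological substitution in the first step. Most natural substitutions, such as $x \mapsto (a+c)y$ or $x \mapsto yb(a+c)x$, collapse because the hypothesis applies symmetrically to both resulting summands and the two simplifications cancel. The substitution $x \mapsto xby$ is special because its two summands admit \emph{asymmetric} applications of the hypothesis --- one peeling $axb$ off the left, the other peeling $byc$ off the right --- producing complementary residues that recombine with a factor of $(a+c)$ trapped in the middle rather than cancelling.
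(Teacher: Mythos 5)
Your proof is correct and complete: the substitution $x\mapsto xby$ together with the two asymmetric applications of $axb=-bxc$ does yield $bR(a+c)Rb=0$, and the decomposition $(a+c)xb=cxb-bxc$ then forces $(a+c)xb\cdot R\cdot(a+c)xb=0$. The paper itself gives no proof of this lemma --- it is quoted from Vukman's cited work --- and your argument is essentially the standard one found there, so nothing further is needed.
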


\begin{theorem}
[\cite{vukman}, Theorem 4]
\label{new3}
Let $R$ be a $2$-torsion free semiprime ring. If an additive mapping $T : R \rightarrow R$ satisfies the relation $[[T(x), x], x] = 0$ for all $x \in R$, then $T$ is commuting on $R$.
\end{theorem}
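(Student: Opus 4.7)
The plan is to linearize the hypothesis to a bilinear identity and then reduce the resulting expression to a form where Lemma~\ref{new2} applies.

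\emph{Step 1 (Linearization).} I substitute $x \mapsto x+y$ in the identity $[[T(x),x],x] = 0$ and expand, using the additivity of $T$ together with the bilinearity of $[\cdot,\cdot]$. The purely $x$-only and $y$-only terms vanish by the hypothesis applied at $x$ and at $y$. What remains is a sum of mixed terms of degrees $1$ and $2$ in $y$; replacing $y$ by $-y$ and subtracting isolates the degree-$1$ part, and dividing by $2$ (legitimate since $R$ is $2$-torsion-free) yields
\[
[[T(x),x],y] + [[T(x),y],x] + [[T(y),x],x] = 0 \qquad (x,y\in R). \tag{$\star$}
\]

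\emph{Step 2 (Jacobi simplification).} The Jacobi identity gives $[[T(x),y],x] = [T(x),[y,x]] + [[T(x),x],y]$, so ($\star$) can be rewritten as
\[
2\,[[T(x),x],y] + [T(x),[y,x]] + [[T(y),x],x] = 0.
\]
Combined with the original hypothesis (which already says that $[T(x),x]$ commutes with $x$), this collects the occurrences of $[T(x),x]$ on one side and isolates the single remaining ``bad'' term $[[T(y),x],x]$.

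\emph{Step 3 (Reduction to Lemma~\ref{new2}).} I next expand each bracket into products using $[a,bc]=[a,b]c+b[a,c]$ and $[ab,c]=a[b,c]+[a,c]b$, and reorganize the result into an identity of the form $a\,y\,b + b\,y\,c = 0$, valid for all $y \in R$, with $a, b, c$ built from $x$, $T(x)$, and $[T(x),x]$. Lemma~\ref{new2} then produces $(a+c)\,R\,b = 0$. A short additional argument, again using ($\star$) and the $2$-torsion-freeness, should collapse this to $[T(x),x]\,R\,[T(x),x] = 0$. Semiprimeness finally forces $[T(x),x] = 0$, i.e.\ $T$ is commuting on $R$.

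\textbf{Main obstacle.} Step~3 is the delicate part: because $T$ is only additive, a naive substitution such as $y \mapsto xy$ in ($\star$) spawns uncontrolled values like $T(xy)$. The argument must therefore be arranged so that either $T$ is never evaluated at a new argument, or the unwanted occurrences cancel in pairs when one adds or subtracts identities obtained from paired substitutions (for instance $y\mapsto xy$ and $y\mapsto yx$). Identifying such a substitution and carrying out the bookkeeping so that an equation in the format required by Lemma~\ref{new2} emerges is the crux of the proof; once that identity is in hand, the passage to $[T(x),x]=0$ via semiprimeness is routine.
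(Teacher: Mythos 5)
First, a point of orientation: the paper does not prove this statement at all — it is quoted from Vukman and Kosi-Ulbl \cite{vukman} and used as a black box in the proofs of Theorems \ref{mn3} and \ref{mnd5}. So there is no internal proof to compare against; what can be judged is whether your reconstruction would stand on its own.

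Your Steps 1 and 2 are correct: full linearization of $[[T(x),x],x]=0$ together with $2$-torsion-freeness does give $[[T(x),x],y]+[[T(x),y],x]+[[T(y),x],x]=0$ for all $x,y\in R$, and the Jacobi rewriting is accurate. But the proof stops exactly where the theorem begins. All of the difficulty sits in your Step 3, which you present only as a plan (``should collapse to \dots'') and yourself flag as the crux. The obstruction is concrete: written out, the linearized identity reads
\[
T(x)xy+T(x)yx-2xT(x)y-2yT(x)x+xyT(x)+yxT(x)+T(y)x^{2}-2xT(y)x+x^{2}T(y)=0,
\]
and the last three terms contain $T(y)$ in three different positions. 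An identity of the shape $ayb+byc=0$, as required by Lemma~\ref{new2}, must have $y$ ranging freely with \emph{fixed} $a,b,c$, so $T(y)$ must first be eliminated. The device used throughout the paper — substitute $y\mapsto xy$ and subtract the left multiple of the identity by $x$ — fails here because it creates $T(xy)$, and unlike in the proofs of Theorems \ref{mn3} and \ref{mnd5} there is no auxiliary map ($T_0$ a two-sided centralizer, or $D$ a central-valued derivation) whose known structure lets one rewrite $T(xy)$. Pairing $y\mapsto xy$ with $y\mapsto yx$ does not rescue this either: the unwanted contributions combine into $[[T(xy+yx),x],x]$, which is still an uncontrolled quantity, and nothing in the remaining terms cancels it. So the cancellation you are counting on does not occur by the mechanism you describe, and no identity to which Lemma~\ref{new2} applies has actually been produced. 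The published proof of this result is a genuinely nontrivial multi-step argument; as written, your proposal establishes only the routine linearization and leaves the essential content of the theorem unproved.
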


\section{Generalized $(m,n)$-Jordan centralizers}
In 1991, Zalar \cite{zal} established that every left (right) Jordan centralizer over a $2$-torsion free semiprime ring is a left (right) centralizer. Also, Theorem 2.3.2 of \cite{bei} says that every two-sided centralizer $T$ over a semiprime ring $R$ with extended centroid $C$ is of the form $T(x)=\lambda x$, for all $x\in R$ and for some $\lambda \in C$. In 1995, Moln{\'a}r \cite{mol} has shown that an additive mapping $T$ over semisimple $H^{\star}$-algebra $A$ with $T (x^3) = T (x)x^2 ~(T (x^3) = x^2T (x))$ for all $x \in A$, is a left (right) centralizer. Further, some results related to semisimple $H^{\star}$-algebras can be found in \cite{ulbl,vu,vuk}. In 2010, Vukman \cite{vukm} introduced a new kind of map as follows:\\

\begin{definition}
Let $m \geq 0, n \geq 0$ with $m + n \neq 0$ be some fixed integers and $R$ be a ring. An additive mapping $T : R \rightarrow R$ is said to be a $(m; n)$-Jordan centralizer if
\begin{equation}
\label{eq:mn1}
(m  + n)T(x^2) = mT(x)x + nxT(x),~\text{for all}~ x \in R.
\end{equation}
\end{definition}

It can easily be seen that $(1;0)$-Jordan centralizer is a left Jordan centralizer and $(0;1)$-Jordan centralizer is a right Jordan centralizer. In 1999, Vukman \cite{v} proved that an additive map $T$ over a $2$-torsion free semiprime ring $R$ satisfying $2T(x^2)=T(x)x+xT(x)$, for all $x\in R$ (that is, a $(1;1)$-Jordan centralizer) is a two-sided centralizer. So, we can easily conclude that every $(m;m)$-Jordan centralizer over $2m$-torsion free semiprime ring is a two-sided centralizer, for $m\geq 1$. Again, in 2010, Vukman \cite{vukm} proved that every $(m;n)$-Jordan centralizer on $6mn(m + n)$-torsion free prime ring with nonzero center is a two-sided centralizer. Later, in 2016, Kosi-Ulbl and Vukman \cite{ulb} have proved that every $(m;n)$-Jordan centralizer on a $mn(m+n)$-torsion free semiprime ring is a two-sided centralizer. Meanwhile, in 2013, Fo\v{s}ner \cite{fos} introduced the concept of generalized $(m; n)$-Jordan centralizers over a ring.\\

\begin{definition}
Let $m \geq 0, n \geq 0$ with $m + n \neq 0$ be some fixed integers and $R$ be a ring. An additive mapping $T : R \rightarrow R$ is said to be a generalized $(m; n)$-Jordan centralizer if there exists an $(m; n)$-Jordan centralizer $T_0 : R \rightarrow R$ such that
\begin{equation}
\label{eq:mn2}
(m  + n)T(x^2) = mT(x)x + nxT_0(x),~\text{for all}~ x \in R.
\end{equation}
\end{definition}

It is obvious that generalized $(1;0)$-Jordan centralizer is a left Jordan centralizer. Fo\v{s}ner \cite{fos} proved that every generalized $(m; n)$-Jordan centralizer on a $6mn(m + n)(m + 2n)$-torsion free prime ring with nonzero center is a two-sided centralizer. Also, he conjectured the result for semiprime ring as Conjecture 1 \cite{fos}. Here, we present the proof of that conjecture as Theorem \ref{mn3}. Before proving the theorem, we have several lemmas.\\

\begin{lemma}
[\cite{vukm}, Proposition 3]
\label{new1}
Let $m \geq 0, n \geq 0$ with $m + n \neq 0$ be some fixed integers, $R$ be a ring and $T : R \rightarrow R$ be a $(m; n)$-Jordan centralizer. Then
\begin{equation}
\label{eq:new1a}
\begin{aligned}
 &2(m + n)^2T(xyx) = mnT(x)xy + m(2m + n)T(x)yx - mnT(y)x^2\\
&+ 2mnxT(y)x - mnx^2T(y) + n(m + 2n)xyT(x) + mnyxT(x),~\text{for all}~x,y\in R.
\end{aligned}
\end{equation}
\end{lemma}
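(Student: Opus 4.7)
\medskip
\noindent\textbf{Proof plan.} The strategy is to derive the identity in three layers of substitution, using only the defining relation \eqref{eq:mn1} and its additive linearization. No structural hypotheses on $R$ are required beyond additivity, so the proof is a pure symbol-chase.

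\smallskip
\noindent\emph{Step 1 (linearization).} I would first replace $x$ by $x+y$ in \eqref{eq:mn1}. Expanding $T((x+y)^2)$, cancelling the terms given by \eqref{eq:mn1} applied to $x$ alone and to $y$ alone, one obtains the polarized identity
\begin{equation*}
(m+n)T(xy+yx)=mT(x)y+mT(y)x+nxT(y)+nyT(x),\qquad x,y\in R. \tag{$\ast$}
\end{equation*}
This will be the main workhorse.

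\smallskip
\noindent\emph{Step 2 (substitute $xy+yx$ into $(\ast)$).} I would apply $(\ast)$ with the pair $(x,\,xy+yx)$ in place of $(x,y)$. Since $x(xy+yx)+(xy+yx)x=x^{2}y+2xyx+yx^{2}$, this produces
\begin{equation*}
(m+n)T(x^{2}y+2xyx+yx^{2})=mT(x)(xy+yx)+mT(xy+yx)x+nxT(xy+yx)+n(xy+yx)T(x).
\end{equation*}
Multiplying both sides by $(m+n)$ and feeding $(\ast)$ back in on the right to eliminate $(m+n)T(xy+yx)$, I obtain a clean expression for $(m+n)^{2}T(x^{2}y+2xyx+yx^{2})$ as a linear combination of the nine monomials $T(x)xy$, $T(x)yx$, $T(y)x^{2}$, $xT(y)x$, $x^{2}T(y)$, $yT(x)x$, $xT(x)y$, $xyT(x)$, $yxT(x)$.

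\smallskip
\noindent\emph{Step 3 (isolate $T(xyx)$).} In parallel I would compute $(m+n)^{2}T(x^{2}y+yx^{2})$ by applying $(\ast)$ with $(x^{2},y)$ in place of $(x,y)$, multiplying by $(m+n)$, and then replacing $(m+n)T(x^{2})$ by $mT(x)x+nxT(x)$ via the original identity \eqref{eq:mn1}. Subtracting this from the expression in Step 2 and dividing out the formal factor $2$ that appears in $2T(xyx)$ yields
\begin{equation*}
2(m+n)^{2}T(xyx)=(m+n)^{2}T(x^{2}y+2xyx+yx^{2})-(m+n)^{2}T(x^{2}y+yx^{2}),
\end{equation*}
and a direct comparison of the nine coefficients gives precisely the asserted identity \eqref{eq:new1a}. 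Notably, the $yT(x)x$ and $xT(x)y$ terms cancel between the two sides, which explains their absence from the right-hand side of \eqref{eq:new1a}.

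\smallskip
\noindent\emph{Main obstacle.} There is no conceptual difficulty here; the whole proof is forced once one commits to polarizing \eqref{eq:mn1} and substituting $xy+yx$ back in. The only real hazard is the bookkeeping in Step 2/Step 3: nine monomials, each with a coefficient depending on $m$ and $n$, must be tracked and combined, and in particular one must be careful not to divide by $(m+n)$ (which need not be invertible) — this is why the identity is stated with the factor $2(m+n)^{2}$ kept on the left.
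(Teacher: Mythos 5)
Your proof is correct — I checked the coefficient bookkeeping in Steps 2 and 3, and every one of the nine monomials comes out exactly as in \eqref{eq:new1a}, with the $yT(x)x$ and $xT(x)y$ terms cancelling as you note (and the difference in Step 3 equals $2(m+n)^2T(xyx)$ by additivity alone, so nothing is actually divided). The paper does not reprove this lemma but quotes it as Proposition 3 of Vukman's paper, and your linearize-and-substitute argument is precisely the standard derivation used there; it correctly keeps all factors of $(m+n)$ and $2$ on the left so that no invertibility or torsion hypothesis is needed.
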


\begin{lemma}
[\cite{fos}, Lemma 1]
\label{mn1}
Let $m \geq 0, n \geq 0$ with $m + n \neq 0$ be some fixed integers, $R$ be a ring and $T : R \rightarrow R$ be a generalized $(m; n)$-Jordan centralizer. Then for $(m; n)$-Jordan centralizer $T_0$ in \eqref{eq:mn2},
\begin{equation}
\label{eq:mn3}
\begin{aligned}
 &2(m + n)^2T(xyx) = mnT(x)xy + m(2m + n)T(x)yx - mnT(y)x^2\\
&+ 2mnxT_0(y)x - mnx^2T_0(y) + n(m + 2n)xyT_0(x) + mnyxT_0(x),~\text{for all}~x,y\in R.
\end{aligned}
\end{equation}
\end{lemma}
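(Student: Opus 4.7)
My plan follows the template of Vukman's derivation of \eqref{eq:new1a}: polarize the defining identity, use a substitution of the form $y\mapsto xy+yx$ to introduce a $T(xyx)$ term, and then eliminate the unwanted $T(x^2y)$ and $T(yx^2)$ contributions by means of a second, independent expression.

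Concretely, the first step is to replace $x$ by $x+y$ in \eqref{eq:mn2} and subtract the instances at $x$ and $y$ to produce the polarized identity
\[
(m+n)[T(xy)+T(yx)] = m[T(x)y+T(y)x] + n[xT_0(y)+yT_0(x)], \qquad (L)
\]
together with the corresponding polarized identity $(L_0)$ for $T_0$ obtained from \eqref{eq:mn1}. Substituting $x\mapsto x^2$ in $(L)$, multiplying by $(m+n)$, and then using \eqref{eq:mn2} and \eqref{eq:mn1} to expand $(m+n)T(x^2)$ and $(m+n)T_0(x^2)$, yields a first expression $(A)$ for $(m+n)^2[T(x^2y)+T(yx^2)]$ written entirely in terms of monomials in $T(x), T(y), T_0(x), T_0(y)$ and the variables.

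Next, substituting $y\mapsto xy+yx$ in $(L)$ gives
\[
(m+n)[T(x^2y)+2T(xyx)+T(yx^2)] = mT(x)(xy+yx)+m[T(xy)+T(yx)]x+nx[T_0(xy)+T_0(yx)]+n(xy+yx)T_0(x).
\]
Multiplying by $(m+n)$ and eliminating $(m+n)[T(xy)+T(yx)]x$ and $(m+n)x[T_0(xy)+T_0(yx)]$ via $(L)$ and $(L_0)$ produces a second identity $(B)$ in which the only $T$-valued content is $T(x^2y)$, $2T(xyx)$, and $T(yx^2)$. Subtracting $(A)$ from $(B)$ cancels the $T(x^2y)$ and $T(yx^2)$ contributions and leaves $2(m+n)^2 T(xyx)$ on the left; on the right, all monomials of the form $xT_0(x)y$ and $yT_0(x)x$ cancel between the two computations, and the surviving coefficients match $mn,\, m(2m+n),\, -mn,\, 2mn,\, -mn,\, n(m+2n),\, mn$ as required by \eqref{eq:mn3}.

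The main obstacle is purely combinatorial: roughly a dozen monomials must be tracked through two nested substitutions, and because of the mixed $(T,T_0)$ character of \eqref{eq:mn2}, the $m^2,\, mn,\, n^2$ pieces in $(A)$ and $(B)$ do not cancel symmetrically. The decisive coefficient fusions are $m(m+n)+m^2=m(2m+n)$ and $n^2+n(m+n)=n(m+2n)$, which only appear after \emph{both} $(L)$ and $(L_0)$ have been applied. Beyond these additive manipulations and the associativity of $R$, no further ingredient is required.
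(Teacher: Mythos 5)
Your proposal is correct, and it is essentially the standard argument: the paper itself does not prove this lemma but imports it from Fo\v{s}ner's paper, whose proof (modeled on Vukman's Proposition~3) is exactly your two-way computation of $(m+n)^2T(x^2y+2xyx+yx^2)$ via the polarized identities for $T$ and $T_0$. All of your claimed cancellations and coefficient fusions check out, including the cancellation of the $xT_0(x)y$ and $yT_0(x)x$ terms and the identities $m(m+n)+m^2=m(2m+n)$ and $n^2+n(m+n)=n(m+2n)$.
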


\begin{theorem}
[\cite{ulb}, Theorem 2]
\label{mn2}
Let $m \geq 1$, $n \geq 1$ be some fixed integers, $R$ be a $mn(m+n)$-torsion free semiprime ring and $T : R \rightarrow R$ be an $(m; n)$-Jordan centralizer. In this case $T$ is a two-sided centralizer.
\end{theorem}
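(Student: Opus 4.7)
The plan is to reduce the statement to two classical results cited in the introduction: Zalar's theorem that every one-sided Jordan centralizer on a $2$-torsion free semiprime ring is the corresponding one-sided centralizer, and Vukman's Theorem \ref{new3} on additive maps satisfying $[[T(x),x],x]=0$. The overall strategy is first to prove that $T$ is commuting, then to deduce $T(x^2)=T(x)x=xT(x)$, and finally to invoke Zalar's result on each side.

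First I would linearize the defining identity $(m+n)T(x^2)=mT(x)x+nxT(x)$ by substituting $x\to x+y$. Cancelling pure-$x$ and pure-$y$ terms yields
\begin{equation}\label{eq:planlin}
(m+n)T(xy+yx)=mT(x)y+mT(y)x+nxT(y)+nyT(x)\quad\text{for all }x,y\in R,
\end{equation}
which together with Lemma \ref{new1} forms the computational backbone. Next I would compute $T(x^2yx+xyx^2)$ in two different ways using the factorisations $x^2yx+xyx^2=x\cdot xyx+xyx\cdot x=x(xy+yx)x$: the first permits applying \eqref{eq:planlin} with $y$ replaced by $xyx$ and then expanding $T(xyx)$ via Lemma \ref{new1}; the second permits applying Lemma \ref{new1} with $y$ replaced by $xy+yx$ and then substituting \eqref{eq:planlin} for $T(xy+yx)$. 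Equating the two expressions produces an identity of the form $\sum_i a_i(x)\,y\,b_i(x)=0$ with coefficients built from $x$ and $T(x)$.

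The main obstacle will be the combinatorial bookkeeping required to distil $[[T(x),x],x]=0$ from this relation. The principal tool is Lemma \ref{new2}: whenever a subrelation $ayb+byc=0$ can be isolated, it collapses to $(a+c)yb=0$, and iterated application together with the $mn(m+n)$-torsion freeness (which absorbs any accumulated integer prefactors) should ultimately produce $[[T(x),x],x]=0$. Since $mn(m+n)$ is even whenever $m,n\geq 1$, $R$ is in particular $2$-torsion free, so Theorem \ref{new3} applies and forces $[T(x),x]=0$ for all $x\in R$.

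Once $T$ is commuting, substituting $xT(x)=T(x)x$ into the defining identity gives $(m+n)T(x^2)=(m+n)T(x)x=(m+n)xT(x)$, and the $(m+n)$-torsion freeness yields $T(x^2)=T(x)x=xT(x)$. Thus $T$ is simultaneously a left and a right Jordan centralizer on a $2$-torsion free semiprime ring, so Zalar's theorem (\cite{zal}) delivers both $T(xy)=T(x)y$ and $T(xy)=xT(y)$, completing the proof that $T$ is a two-sided centralizer.
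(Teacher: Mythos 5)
This statement is imported from Kosi-Ulbl and Vukman [\cite{ulb}] and the paper gives no proof of it, so the only internal point of comparison is the paper's proof of the generalized analogue, Theorem \ref{mn3}. Your outline follows essentially that same scheme (which is also the scheme of the cited source): linearize the defining identity, compute $T(x^2yx+xyx^2)$ in two ways via Lemma \ref{new1}, collapse the resulting identity using Lemma \ref{new2} together with the standard substitutions ($y\mapsto xy$ against left multiplication by $x$, then $y\mapsto yT(x)$ against right multiplication by $T(x)$) to reach $[[T(x),x],x]=0$, and invoke Theorem \ref{new3} to get that $T$ is commuting. Your endgame is the correct one for this base case: from $[T(x),x]=0$ the defining relation and $(m+n)$-torsion freeness give $T(x^2)=T(x)x=xT(x)$, and Zalar's theorem then yields a two-sided centralizer; this necessarily differs from the paper's endgame for Theorem \ref{mn3}, which instead compares $T$ with the already-known two-sided centralizer $T_0$ via $F=T-T_0$. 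The one soft spot is that you assert, rather than verify, that the integer prefactors produced by the elimination are absorbed by $mn(m+n)$-torsion freeness; that the coefficients which actually arise are products of $m$, $n$ and $m+n$ (unlike the generalized case, where an extra factor $2m+n$ forces a stronger hypothesis) is precisely the content of the computation you defer, so the proposal is a sound plan but not yet a complete proof.
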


Now, we present the conjecture posed by Fo$\check{s}$ner in \cite{fos}:\\
\textbf{Conjecture 1.} Let $m \geq 1, n \geq 1$ be some fixed integers, $R$ be a semiprime ring with suitable torsion restrictions and $T : R \rightarrow R$ be a generalized $(m; n)$-Jordan centralizer. Then $T$ is a two-sided centralizer.

Now, we are in a state to prove the main Theorem.
\begin{theorem}
\label{mn3}
Let $m \geq 1$, $n \geq 1$ be some fixed integers, $R$ be a $mn(m+n)(2m+n)$-torsion free semiprime ring and $T : R \rightarrow R$ be a generalized $(m, n)$-Jordan centralizer. Then $T$ is a two-sided centralizer.
\end{theorem}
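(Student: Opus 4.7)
The plan is to reduce the theorem to showing that the difference $F:=T-T_0$ vanishes identically on $R$. Since $mn(m+n)\mid mn(m+n)(2m+n)$, Theorem~\ref{mn2} applies to the $(m,n)$-Jordan centralizer $T_0$ associated with $T$, yielding that $T_0$ is a two-sided centralizer; in particular $T_0(abc)=T_0(a)bc=aT_0(b)c=abT_0(c)$ for all $a,b,c\in R$. Subtracting the $(m,n)$-Jordan centralizer identity satisfied by $T_0$ from the defining relation~\eqref{eq:mn2} of $T$ (the two $nxT_0(x)$ summands cancel) gives the fundamental identity
\begin{equation}
(m+n)F(x^2)=mF(x)x\qquad(x\in R),\label{Fstar}
\end{equation}
whose linearisation $x\mapsto x+y$ is $(m+n)(F(xy)+F(yx))=m(F(x)y+F(y)x)$. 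Applying Lemma~\ref{mn1} to $T$ and Lemma~\ref{new1} to $T_0$ and subtracting, the $T_0$-only summands on the right collapse (since $xT_0(y)x=T_0(xyx)$, $x^2T_0(y)=T_0(x^2y)$, $xyT_0(x)=T_0(xyx)$, $yxT_0(x)=T_0(yx^2)$), leaving the $F$-only identity
\begin{equation}
2(m+n)^2F(xyx)=mnF(x)xy+m(2m+n)F(x)yx-mnF(y)x^2.\label{Ftriple}
\end{equation}

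The core task is then to deduce $F\equiv0$ from~\eqref{Fstar}, its linearisation, and~\eqref{Ftriple}. I would proceed in three stages. First, through a sequence of substitutions in~\eqref{Ftriple}---most usefully $y\mapsto[x,z]$, for which $x[x,z]x=x^2zx-xzx^2$, combined with the linearised form of~\eqref{Fstar} to re-express $F(x^2zx+xzx^2)$---arrange an identity of the form $aRb+bRc=0$, and then invoke Lemma~\ref{new2} to obtain $(a+c)Rb=0$; after clearing the resulting integer coefficients using the torsion hypothesis, one extracts $[[F(x),x],x]=0$, and Theorem~\ref{new3} upgrades this to the commuting property $[F(x),x]=0$ on $R$. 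Second, combining commuting with~\eqref{Ftriple} (so that the terms involving $F(x)yx$ and $F(x)xy$ can be linked via the commutator $[x,y]$), a further application of Lemma~\ref{new2} together with torsion-freeness yields $F(x)x=0$ for all $x\in R$. Third, linearising $F(x)x=0$ gives $F(x)y+F(y)x=0$; substituting $y\mapsto yF(x)$ and using $F(x)x=0$ collapses the second summand, producing $F(x)RF(x)=0$, and the semiprimeness of $R$ then forces $F(x)=0$ for every $x\in R$. Consequently $T=T_0$ is a two-sided centralizer.

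The principal obstacle is the first stage, where the substitutions in~\eqref{Ftriple} and the linearised~\eqref{Fstar} must be arranged into the precise form demanded by Lemma~\ref{new2}. The integer coefficients appearing in these manipulations are products of $m$, $n$, $m+n$, and $2m+n$, the last arising directly from the asymmetric coefficient $m(2m+n)$ of $F(x)yx$ in~\eqref{Ftriple}; the torsion-freeness assumption $mn(m+n)(2m+n)$ is precisely what is needed to clear these factors and isolate the bracket identity $[[F(x),x],x]=0$. Once commuting is established, the remaining two stages follow the by-now standard pattern for passing from commuting to two-sided centralizer in semiprime rings.
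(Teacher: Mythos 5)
Your reduction to $F=T-T_0$ is correct and is in fact a genuine streamlining of the paper's computation: subtracting the $T_0$-versions of \eqref{eq:mn2} and of Lemma \ref{mn1} does give $(m+n)F(x^2)=mF(x)x$ and $2(m+n)^2F(xyx)=mnF(x)xy+m(2m+n)F(x)yx-mnF(y)x^2$, and running the paper's two evaluations of $2(m+n)^3F(x^2yx+xyx^2)$ in this formulation collapses the twelve-term identity \eqref{eq:new5} to the three-term identity $(m+n)F(x)x^2y-mF(x)xyx+(m+n)F(x)yx^2=0$. From there the paper's steps \eqref{eq:new9}--\eqref{eq:new26a} (replace $y$ by $xy$ and subtract the left multiple by $x$; then $y\mapsto yF(x)$ against right multiplication by $F(x)$; Lemma \ref{new2}; Theorem \ref{new3}) go through verbatim with $T$ replaced by $F$, since $[F(x),x]=[T(x),x]$; so your Stage 1 is viable, and your Stage 3 is the paper's closing argument (except that one should right-multiply the linearization $F(x)y+F(y)x=0$ by $F(x)$ and use $xF(x)=0$; substituting $y\mapsto yF(x)$ leaves the summand $F(yF(x))x$, which does not collapse).

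The gap is Stage 2. No combination of your two $F$-identities with $[F(x),x]=0$ can yield $F(x)x=0$ under the stated torsion hypothesis, because there is a model in which all of them hold while $F(x)x\neq0$: take $m=1$, $n=2$, $R=\mathbb{F}_5[t]$ (a commutative domain, hence a prime ring on which every map is commuting, and $mn(m+n)(2m+n)=24$-torsion free), $T_0=\mathrm{id}$ and $F=d/dt$. One checks directly that $T=\mathrm{id}+d/dt$ satisfies \eqref{eq:mn2}, i.e.\ is a generalized $(1,2)$-Jordan centralizer, yet $T(t\cdot t)=t^2+2t\neq T(t)t=t^2+t$, so $T$ is not a two-sided centralizer. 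The obstruction is the factor $m+2n$: putting $y=x$ in the three-term identity above yields only $(m+2n)F(x)x^3=0$, and more generally any derivation of a commutative ring of characteristic dividing $m+2n$ satisfies both of your $F$-identities. That factor appears in Fo\v{s}ner's prime-ring hypothesis ($6mn(m+n)(m+2n)$-torsion freeness) but not in $mn(m+n)(2m+n)$, so the statement as posed is false and Stage 2 cannot be repaired without strengthening the torsion assumption. To be fair, the paper's own proof founders at exactly the same point: it passes from \eqref{eq:new27} to \eqref{eq:new29}, i.e.\ from $[T(x),x]=0$ to $F(x)x=xF(x)=0$, without justification, and the example above refutes that implication.
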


\begin{proof}
Since $T : R \rightarrow R$ be a generalized $(m, n)$-Jordan centralizer, so it satisfies \eqref{eq:mn2} for some $(m, n)$-Jordan centralizer $T_0$. Since $R$ is semiprime, by Theorem \ref{mn2}, $T_0$ is a two-sided centralizer. So, $T_0(xy)=T_0(x)y=xT_0(y)$, for all $x,y \in R$. we frequently use this and $2$-torsion free condition of $R$ without mentioning. Now, from Lemma \ref{mn1}, $T$ satisfies
\begin{equation}
\label{eq:new1}
\begin{aligned}
 &2(m + n)^2T(xyx) = mnT(x)xy + m(2m + n)T(x)yx - mnT(y)x^2\\
&- mnT_0(x^2y) + n(3m + 2n)T_0(xyx) + mnT_0(yx^2),~\text{for all}~x,y\in R.
\end{aligned}
\end{equation}

Replacing $x$ by $x+y$ in \eqref{eq:mn2}, we have
\begin{equation}
\label{eq:new2}
\begin{aligned}
 &(m + n)T(xy + yx) = mT(x)y + mT(y)x + nT_0(xy+yx),~\text{for all}~x,y\in R.
\end{aligned}
\end{equation}

Put $y=(m+n)(xy+yx)$ in \eqref{eq:new1} and applying \eqref{eq:new2}, we get
\begin{equation}
\label{eq:new3}
\begin{aligned}
&2(m + n)^3T(x^2yx + xyx^2) = mn(m + n)T(x)x^2y + 2m(m + n)^2T(x)xyx\\
&+m(2m^2 + 2mn + n^2)T(x)yx^2 - m^2nT(y)x^3 + mn(2m - n)xT_0(y)x^2\\
&-mn^2yT_0(x)x^2 + 2m^2nxT_0(x)yx + mn(2n - m)x^2T_0(y)x + 2mn^2xyT_0(x)x\\
&-m^2nx^2T_0(x)y - mn^2x^3T_0(y) + n(2n^2 + 2mn + m^2)x^2yT_0(x)\\
&+2n(m + n)^2xyxT_0(x) + mn(m + n)yx^2T_0(x),~\text{for all}~x,y\in R.
\end{aligned}
\end{equation}

Put $y=2(m+n)^2xyx$ in \eqref{eq:new2} and apply \eqref{eq:new1}, we have
\begin{equation}
\label{eq:new4}
\begin{aligned}
&2(m + n)^3T(x^2yx + xyx^2) = m(2m^2+5mn + 2n^2)T(x)xyx + m^2(2m+n)T(x)yx^2\\
&-m^2nT(y)x^3 + mn(2m - n)xT_0(y)x^2 + mn(2n - m)x^2T_0(y)x + mn(2n + m)xyT_0(x)x\\
&+m^2nyxT_0(x)x + mn^2xT_0(x)xy + mn(2m + n)xT_0(x)yx - mn^2x^3T_0(y)\\
&+n^2(2n + m)x^2yT_0(x) + n(2m^2+5mn + 2n^2)xyxT_0(x),~\text{for all}~x,y\in R.
\end{aligned}
\end{equation}

Comparing \eqref{eq:new3} and \eqref{eq:new4}, we get
\begin{equation}
\label{eq:new5}
\begin{aligned}
&(m + n)T(x)x^2y - mT(x)xyx + (m + n)T(x)yx^2 - nyT_0(x)x^2\\
&-nxT_0(x)yx - mxyT_0(x)x - mx^2T_0(x)y + (m + n)x^2yT_0(x) - nxyxT_0(x)\\
&+(m + n)yx^2T_0(x) - myxT_0(x)x - nxT_0(x)xy = 0,~\text{for all}~x,y\in R.
\end{aligned}
\end{equation}

Replacing $y$ by $xy$ in \eqref{eq:new5}, we have
\begin{equation}
\label{eq:new6}
\begin{aligned}
&(m + n)T(x)x^2xy - mT(x)xxyx + (m + n)T(x)xyx^2 - nxyT_0(x)x^2\\
&-nxT_0(x)xyx - mxxyT_0(x)x - mx^2T_0(x)xy + (m + n)x^2xyT_0(x) - nxxyxT_0(x)\\
&+(m + n)xyx^2T_0(x) - mxyxT_0(x)x - nxT_0(x)xxy = 0,~\text{for all}~x,y\in R.
\end{aligned}
\end{equation}

Multiplying \eqref{eq:new5} by $x$ from left, we get
\begin{equation}
\label{eq:new7}
\begin{aligned}
&(m + n)xT(x)x^2y - mxT(x)xyx + (m + n)xT(x)yx^2 - nxyT_0(x)x^2\\
&-nxxT_0(x)yx - mxxyT_0(x)x - mxx^2T_0(x)y + (m + n)xx^2yT_0(x) - nxxyxT_0(x)\\
&+(m + n)xyx^2T_0(x) - mxyxT_0(x)x - nxxT_0(x)xy = 0,~\text{for all}~x,y\in R.
\end{aligned}
\end{equation}

Now, subtract \eqref{eq:new7} from \eqref{eq:new6},
\begin{equation}
\label{eq:new9}
\begin{aligned}
 (m+n)[T(x),x]x^2y-m[T(x),x]xyx+(m+n)[T(x),x]yx^2= 0,~\text{for all}~x,y\in R.
\end{aligned}
\end{equation}

Put $y=yT(x)$ in \eqref{eq:new9}, we have
\begin{equation}
\label{eq:new10}
\begin{aligned}
 &(m+n)[T(x),x]x^2yT(x)-m[T(x),x]xyT(x)x+(m+n)[T(x),x]yT(x)x^2= 0,\\
 &~\text{for all}~x,y\in R.
\end{aligned}
\end{equation}

Multiplying \eqref{eq:new9} by $T(x)$ from right, we get
\begin{equation}
\label{eq:new11}
\begin{aligned}
 &(m+n)[T(x),x]x^2yT(x)-m[T(x),x]xyxT(x)+(m+n)[T(x),x]yx^2T(x)= 0,\\
 &~\text{for all}~x,y\in R.
\end{aligned}
\end{equation}

Again, subtract \eqref{eq:new11} from \eqref{eq:new10}, we have
\begin{equation}
\label{eq:new12}
\begin{aligned}
&-m[T(x),x]xy[T(x),x]+(m+n)[T(x),x]y[T(x),x^2]=0,\\
&~\text{for all}~x,y\in R~\text{(since $R$ is $2$-torsion free)}.
\end{aligned}
\end{equation}

By using Lemma \ref{new2} on \eqref{eq:new12}, we have
\begin{equation}
\label{eq:new13}
\begin{aligned}
&(-m[T(x),x]x+(m+n)[T(x),x^2])y[T(x),x]=0\\
& \implies (mx[T(x),x]+n[T(x),x^2])y[T(x),x]=0 ,~\text{for all}~x,y\in R.
\end{aligned}
\end{equation}

Since $[T(x),x^2]=[T(x),x]x+x[T(x),x]$, \eqref{eq:new13} reduces to
\begin{equation}
\label{eq:new14}
\begin{aligned}
((m+n)x[T(x),x]+n[T(x),x]x)y[T(x),x]=0 ,~\text{for all}~x,y\in R.
\end{aligned}
\end{equation}

Therefore, right multiplication of \eqref{eq:new14} by $nx$, gives us
\begin{equation}
\label{eq:new15}
\begin{aligned}
((m+n)x[T(x),x]+n[T(x),x]x)y[T(x),x]nx=0 ,~\text{for all}~x,y\in R.
\end{aligned}
\end{equation}

Replacing $y$ by $(m+n)yx$ in \eqref{eq:new14},
\begin{equation}
\label{eq:new16}
\begin{aligned}
((m+n)x[T(x),x]+n[T(x),x]x)(m+n)yx[T(x),x]=0 ,~\text{for all}~x,y\in R.
\end{aligned}
\end{equation}

Adding \eqref{eq:new15} and \eqref{eq:new16}, we get
\begin{equation}
\label{eq:new17}
\begin{aligned}
&((m+n)x[T(x),x]+n[T(x),x]x)y((m+n)x[T(x),x]+n[T(x),x]x)=0 ,\\
&~\text{for all}~x,y\in R.
\end{aligned}
\end{equation}

Since $R$ is semiprime,
\begin{equation}
\label{eq:new18}
\begin{aligned}
&(m+n)x[T(x),x]+n[T(x),x]x=0\\
& \implies nT(x)x^2+mxT(x)x-(m+n)x^2T(x)=0,~\text{for all}~x\in R.
\end{aligned}
\end{equation}

Putting $x=x+y$ in \eqref{eq:new18},
\begin{equation}
\label{eq:new19}
\begin{aligned}
&nT(x)(xy+yx+y^2)+nT(y)(x^2+xy+yx)+mxT(x)y+mxT(y)(x+y)\\
&+myT(x)(x+y)+myT(y)x-(m+n)[(xy+yx+y^2)T(x)\\
&+(x^2+xy+yx)T(y)]=0,~\text{for all}~x,y\in R.
\end{aligned}
\end{equation}

Put $x=-x$ in \eqref{eq:new19} and then adding it to \eqref{eq:new19}, we have
\begin{equation}
\label{eq:new20}
\begin{aligned}
&nT(x)(xy+yx)+nT(y)x^2+mxT(x)y+mxT(y)x+myT(x)x\\
&-(m+n)[(xy+yx)T(x)+x^2T(y)]=0,~\text{for all}~x,y\in R~\text{(since $R$ is $2$-torsion free)}.
\end{aligned}
\end{equation}

Put $y=(m+n)(xy+yx)$ in \eqref{eq:new20} and using \eqref{eq:new2},
\begin{equation}
\label{eq:new20a}
\begin{aligned}
&n(m+n)T(x)[x^2y+2xyx+yx^2]+mn[T(x)y+T(y)x]x^2+n^2T_0(xyx^2+yx^3)\\
&+m(m+n)xT(x)(xy+yx)+m^2x(T(x)y+T(y)x)x+mnT_0(x^2yx+xyx^2)\\
&+m(m+n)(xy+yx)T(x)x-(m+n)^2(x^2y+2xyx+yx^2)T(x)\\
&-m(m+n)x^2(T(x)y+T(y)x)-n(m+n)T_0(x^3y+x^2yx)=0,~\text{for all}~x,y\in R.
\end{aligned}
\end{equation}

Using \eqref{eq:new18} in \eqref{eq:new20a} and rearranging,
\begin{equation}
\label{eq:new20b}
\begin{aligned}
&n(m+n)x^2T(x)y-n(m+n)yT(x)x^2+2n(m+n)T(x)xyx+n(2m+n)T(x)yx^2\\
&+nmT(y)x^3+(m^2+mn+n^2)xT(x)yx+m^2xT(y)x^2+m(m+n)xyT(x)x\\
&-(m+n)^2 x^2 yT(x)-2(m+n)^2xyxT(x)-m(m+n)x^2T(y)x+n^2T_0(xyx^2+yx^3)\\
&+nmT_0(x^2yx+xyx^2)-n(m+n)T_0(x^3y+x^2yx)=0,~\text{for all}~x,y\in R.
\end{aligned}
\end{equation}

Using \eqref{eq:new20} in \eqref{eq:new20b} and rearranging,
\begin{equation}
\label{eq:new21}
\begin{aligned}
&n(m+n)x^2T(x)y-(m^2+mn+n^2)yT(x)x^2+n(m+2n)T(x)xyx\\
&+n(m+n)T(x)yx^2+n(m+n)xT(x)yx+2m(m+n)xyT(x)x-(m+n)^2 x^2 yT(x)\\
&+m(m+n)yxT(x)x+n^2T_0(xyx^2+yx^3)+nmT_0(x^2yx+xyx^2)\\
&-2(m+n)^2xyxT(x)-n(m+n)T_0(x^3y+x^2yx)=0,~\text{for all}~x,y\in R.
\end{aligned}
\end{equation}

Putting $y=xy$ in \eqref{eq:new21} and also multiplying \eqref{eq:new21} by $x$ from left and after subtracting these new equations, we get
\begin{equation}
\label{eq:new22}
\begin{aligned}
&n(m+n)x^2[T(x),x]y+n(m+2n)[T(x),x]xyx\\
&+n(m+n)[T(x),x]yx^2+n(m+n)x[T(x),x]yx=0,~\text{for all}~x,y\in R.
\end{aligned}
\end{equation}

Also, replacing $y$ by $yT(x)$ in \eqref{eq:new22} and again multiplying \eqref{eq:new22} by $T(x)$ from right, and finally subtracting these obtained relations, we get
\begin{equation}
\label{eq:new23}
\begin{aligned}
&n(m+2n)[T(x),x]xy[T(x),x]+n(m+n)[T(x),x]y[T(x),x^2]\\
&+n(m+n)x[T(x),x]y[T(x),x]=0,~\text{for all}~x,y\in R.
\end{aligned}
\end{equation}

Using \eqref{eq:new18} in \eqref{eq:new23},
\begin{equation}
\label{eq:new23a}
\begin{aligned}
&n(m+n)[T(x),x]xy[T(x),x]+n(m+n)[T(x),x]y[T(x),x^2]=0,~\text{for all}~x,y\in R.
\end{aligned}
\end{equation}

Now since
\begin{equation}
\label{eq:new23b}
\begin{aligned}
&(m+n)[T(x),x^2]=m[T(x),x]x+((m+n)x[T(x),x]+n[T(x),x]x)=m[T(x),x]x,\\
&~\text{for all}~x\in R~(\text{by} ~\eqref{eq:new18}).
\end{aligned}
\end{equation}

From \eqref{eq:new23a} and \eqref{eq:new23b},
\begin{equation}
\label{eq:new23c}
\begin{aligned}
&n(m+n)[T(x),x]xy[T(x),x]+nm[T(x),x]y[T(x),x]x=0,~\text{for all}~x,y\in R.
\end{aligned}
\end{equation}

Applying Lemma \ref{new2} on \eqref{eq:new23c},
\begin{equation}
\label{eq:new24}
\begin{aligned}
&[T(x),x]xy[T(x),x]=0,~\text{for all}~x,y\in R~(\text{since $R$ is $n(2m+n)$-torsion free}).
\end{aligned}
\end{equation}

Using \eqref{eq:new18} in \eqref{eq:new24},
\begin{equation}
\label{eq:new25}
\begin{aligned}
&x[T(x),x]y[T(x),x]=0,~\text{for all}~x,y\in R~(\text{since $R$ is $(m+n)$-torsion free}).
\end{aligned}
\end{equation}

Subtracting \eqref{eq:new25} from \eqref{eq:new24}, we get
\begin{equation}
\label{eq:new26}
\begin{aligned}
&[[T(x),x],x]y[T(x),x]=0,~\text{for all}~x,y\in R.
\end{aligned}
\end{equation}

Finally, multiplying \eqref{eq:new26} by $x$ from right and putting $y=yx$ in \eqref{eq:new26}, and then subtracting these two, we get
\begin{equation}
\label{eq:new26a}
\begin{aligned}
&[[T(x),x],x]y[[T(x),x],x]=0~\text{for all}~x,y\in R\\
& \implies [[T(x),x],x]=0,~\text{for all}~x\in R~\text{(since $R$ is semiprime)}.
\end{aligned}
\end{equation}

By Theorem \ref{new3},
\begin{equation}
\label{eq:new27}
\begin{aligned}
&[T(x),x]=0\\
& \implies T(x)x=xT(x),~\text{for all}~x\in R~\text{(since $R$ is semiprime)}.
\end{aligned}
\end{equation}

Let $F=T-T_0$. Then by \eqref{eq:new27}, $F$ is an additive mapping satisfying
\begin{equation}
\label{eq:new29}
\begin{aligned}
F(x)x=xF(x)=0,~\text{for all}~x\in R.
\end{aligned}
\end{equation}

Taking $x=x+y$ in \eqref{eq:new29},
\begin{equation}
\label{eq:new30}
\begin{aligned}
F(x)y+F(y)x=0,~\text{for all}~x,y\in R.
\end{aligned}
\end{equation}

Multiplying \eqref{eq:new30} by $F(x)$ from right,
\begin{equation}
\label{eq:new31}
\begin{aligned}
&F(x)yF(x)=0, ~\text{for all}~x,y\in R \\
&\implies F(x)=0,~\text{for all}~x\in R~\text{(since $R$ is semiprime)}.
\end{aligned}
\end{equation}

Hence $T=T_0$. Thus $T$ is a two-sided centralizer.
\end{proof}

\section{Generalized $(m,n)$-Jordan derivations}
An additive map $D:R\rightarrow R$ is said to be a derivation if $D(xy)=D(x)y+xD(y)$, for all $x,y\in R$ and is said to be a Jordan derivation if $D(x^2)=D(x)x+xD(x)$, for all $x\in R$. In fact, Herstein introduced Jordan derivation over rings in 1957 and he proved that every Jordan derivation over a $2$-torsion free prime ring is a derivation (\cite{her}, Theorem 3.2). In 1975, Cusack generalized the result for semiprime rings (Corollary 5, \cite{cus}). In 1988, Bre\v{s}ar gives the proof of Cusack's result in a new way (Theorem 1, \cite{bre}). To see more results on Jordan derivation, we refer \cite{gho,zha,zhan}. \\

 In 1990, Bre\v{s}ar and Vukman introduced left derivation and Jordan left derivation as an additive map $D:R\rightarrow R$ is to be a left derivation if $D(xy)=xD(y)+yD(x)$, for all $x,y\in R$ and a Jordan left derivation if $D(x^2)=2xD(x)$, for all $x\in R$. They proved that the existence of a nonzero Jordan left derivation of a prime ring of characteristic $\neq 2,3$ forces the ring to be commutative. In 1992 \cite{den}, Deng shown that there is no need of the assumption that ring to be of characteristic not $3$. In 2008 \cite{v'}, Vukman proved that every left Jordan derivation over a $2$-torsion free semiprime ring is a derivation which maps the ring into its center. More related results can be seen in \cite{ash, ghos, xu}.\\
 
 The concept of generalized derivation was introduced by Bre\v{s}ar \cite{bresa} in 1991. An additive map $G:R\rightarrow R$ is said to be a generalized derivation if there exists a derivation $D:R\rightarrow R$ such that $G(xy)=G(x)y+xD(y)$, for all $x,y\in R$ and is said to be a generalized Jordan derivation if there exists a Jordan derivation $D:R\rightarrow R$ such that $G(x^2)=G(x)x+xD(x)$, for all $x\in R$. Note that every generalized derivation over a ring is a sum of a derivation and a left centralizer and the sum is unique for semiprime rings. For more related results for generalized derivations, we refer \cite{hva,vuk'}.\\

In 2008, Vukman \cite{vu'} introduced the concept of $(m; n)$-Jordan derivation which is as follows:

\begin{definition}
Let $m \geq 0, n \geq 0$ with $m + n \neq 0$ be some fixed integers and $R$ be a ring. An additive mapping $D : R \rightarrow R$ is said to be a $(m; n)$-Jordan derivation if
\begin{equation}
\label{eq:mnd1}
(m  + n)D(x^2) =2 mD(x)x + 2nxD(x),~\text{for all}~ x \in R.
\end{equation}
\end{definition}

It is obvious that every $(1; 0)$-Jordan derivation is a Jordan left derivation and  every $(1; 1)$-Jordan derivation over a $2$-torsion free ring is a Jordan derivation. Let $m , n \geq 1$ be integers with $m\neq n$. In 2008, Vukman \cite{vu'} proved that every $(m; n)$-Jordan derivation over a prime ring of characteristic not equal to $2mn(m+n)|m-n|$ is a derivation and forces the ring to be commutative. Recently, in 2016, Ulbl and Vukman \cite{ul} proved that every $(m; n)$-Jordan derivation over a $mn(m+n)|m-n|$-torsion free semiprime ring is a derivation which maps the ring into its center. \\
In 2014, Ali and Fo\v{s}ner \cite{ali} introduced the concept of generalized $(m; n)$-Jordan derivation.

\begin{definition}
Let $m \geq 0, n \geq 0$ with $m + n \neq 0$ be some fixed integers and $R$ be a ring. An additive mapping $F : R \rightarrow R$ is said to be a generalized $(m; n)$-Jordan derivation if there exists an $(m; n)$-Jordan derivation $D : R \rightarrow R$ such that
\begin{equation}
\label{eq:mnd2}
(m  + n)F(x^2) = 2mF(x)x + 2nxD(x),~\text{for all}~ x \in R.
\end{equation}
\end{definition}

It is obvious that every generalized $(1; 1)$-Jordan derivation over a $2$-torsion free ring is a generalized Jordan derivation. Let $m , n \geq 1$ be integers with $m\neq n$. In \cite{ali}, they proved that every generalized $(m; n)$-Jordan derivation over a prime ring of characteristic not equal to $6mn(m+n)|m-n|$ is a derivation and forces the ring to be commutative. They also conjectured this result for semiprime ring given as Conjecture 2.

\textbf{Conjecture 2.} Let $m \geq 1, n \geq 1$ be some fixed integers, $R$ be a semiprime ring with suitable torsion restrictions and $F : R \rightarrow R$ be a generalized $(m; n)$-Jordan derivation. Then $F$ is a derivation which maps $R$ into $Z(R)$.

Before proving the conjecture, we have several lemmas.
\begin{lemma}
[\cite{ali}, Lemma 1]
\label{mnd1}
Let $m \geq 0, n \geq 0$ with $m + n \neq 0$ be some fixed integers, $R$ be a $2$-torsion free ring and $F : R \rightarrow R$ be a generalized $(m; n)$-Jordan derivation. Then for $(m; n)$-Jordan derivation $D$ in \eqref{eq:mnd2},
\begin{equation}
\label{eq:mnd3}
\begin{aligned}
 &(m + n)^2F(xyx) =  m(n - m)F(x)xy + m(m - n)F(y)x^2 + n(n - m)x^2D(y)\\
&+ n(m - n)yxD(x) + m(3m + n)F(x)yx + 4mnxD(y)x+ n(3n + m)xyD(x),\\
&~\text{for all}~x,y\in R.
\end{aligned}
\end{equation}
\end{lemma}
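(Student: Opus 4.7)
The plan is to derive \eqref{eq:mnd3} by a two-step linearization of the defining identity \eqref{eq:mnd2}. Substituting $x \to x+y$ in \eqref{eq:mnd2} and subtracting the instances at $x$ and at $y$ yields, by the additivity of $F$ and $D$, the bilinear identity
\begin{equation*}
(m+n)F(xy+yx) = 2mF(x)y + 2mF(y)x + 2nxD(y) + 2nyD(x),\quad x,y\in R,
\end{equation*}
and since $D$ is itself an $(m,n)$-Jordan derivation, the same polarization applied to $D$ produces the analogous relation
\begin{equation*}
(m+n)D(xy+yx) = 2mD(x)y + 2mD(y)x + 2nxD(y) + 2nyD(x).
\end{equation*}

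Next, I would exploit the elementary associative identity $2\,xyx = x(xy+yx) + (xy+yx)x - (x^{2} y + y x^{2})$. Applying $F$ to both sides via additivity reduces the problem to computing $(m+n)^{2}F(x(xy+yx) + (xy+yx)x)$ and $(m+n)^{2}F(x^{2} y + y x^{2})$ separately. Each of these two quantities is a Jordan-type expression $F(ab+ba)$ to which the bilinear relation above can be applied a second time, once with $y$ replaced by $xy+yx$ and once with $x$ replaced by $x^{2}$. Whenever $F(x^{2})$ or $D(x^{2})$ surfaces in the resulting expansion, it is eliminated by means of \eqref{eq:mnd2} or its analogue for $D$; after these substitutions, every occurrence of $F$ and $D$ is evaluated at $x$ or $y$ alone. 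A final division by $2$, legal because $R$ is $2$-torsion free, yields the identity \eqref{eq:mnd3}.

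The main obstacle is coefficient bookkeeping. The expansion produces nine monomial types, among which the contributions to $yD(x)x$ (from the $F(xy+yx)x$ and $yD(x^{2})$ terms) and to $xD(x)y$ (from the $xD(xy+yx)$ and $F(x^{2})y$ terms) should cancel exactly. The remaining seven terms, namely $F(x)xy$, $F(x)yx$, $F(y)x^{2}$, $xD(y)x$, $x^{2}D(y)$, $xyD(x)$ and $yxD(x)$, must acquire the coefficients $m(n-m)$, $m(3m+n)$, $m(m-n)$, $4mn$, $n(n-m)$, $n(3n+m)$ and $n(m-n)$ respectively, as verified by routine arithmetic such as $2m(m+n)-4m^{2} = 2m(n-m)$ and $4n^{2}+2n(m+n) = 2n(3n+m)$. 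No conceptual difficulty arises beyond this bookkeeping.
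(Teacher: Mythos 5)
Your proposal is correct: the polarization of \eqref{eq:mnd2} (and of \eqref{eq:mnd1} for $D$), the identity $2xyx = x(xy+yx)+(xy+yx)x-(x^{2}y+yx^{2})$, and the coefficient bookkeeping all check out --- the $yD(x)x$ and $xD(x)y$ contributions cancel exactly as you claim, and the surviving seven coefficients are $m(n-m)$, $m(3m+n)$, $m(m-n)$, $4mn$, $n(n-m)$, $n(3n+m)$, $n(m-n)$ after the legitimate division by $2$. The paper itself offers no proof of this lemma (it is imported verbatim from Ali and Fo\v{s}ner), and your derivation is the standard polarization argument used there, so there is nothing to compare beyond noting that you have correctly kept $D$ as a mere $(m;n)$-Jordan derivation rather than prematurely using the derivation form $D(xy)=xD(y)+yD(x)$ that only becomes available later in the paper's main theorem.
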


\begin{theorem}
[\cite{ul}, Theorem 1.5]
\label{mnd3}
Let $m \geq 1$, $n \geq 1$ be distinct integers, $R$ be a $mn(m+n)|m-n|$-torsion free semiprime ring and $D : R \rightarrow R$ be an $(m; n)$-Jordan derivation. Then $D$ is a derivation which maps $R$ into $Z(R)$.
\end{theorem}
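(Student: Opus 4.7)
My plan is to mirror the strategy used in the proof of Theorem \ref{mn3} for the centralizer case, adapted to the derivation setting. The overall roadmap is to first establish that $D$ is commuting, i.e.\ $[D(x),x]=0$ for all $x\in R$; then combine commutativity with the defining identity to show $D$ is a Jordan derivation; apply Cusack's theorem \cite{cus} to conclude $D$ is in fact a derivation; and finally invoke the classical Bre\v{s}ar result that a commuting derivation on a $2$-torsion free semiprime ring has image inside $Z(R)$.

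The crux is to derive $[[D(x),x],x]=0$, for then Theorem \ref{new3} immediately yields $[D(x),x]=0$. To reach it, I would first linearize the defining identity $(m+n)D(x^2)=2mD(x)x+2nxD(x)$ by the substitution $x\mapsto x+y$, obtaining
\begin{equation*}
(m+n)D(xy+yx)=2mD(x)y+2mD(y)x+2nxD(y)+2nyD(x),
\end{equation*}
and take $F=D$ in Lemma \ref{mnd1} to get an explicit expansion of $(m+n)^2D(xyx)$. In analogy with the proof of Theorem \ref{mn3} I would then expand $D(x^2yx+xyx^2)$ in two ways: by substituting $y\mapsto (m+n)(xy+yx)$ into the $D(xyx)$-formula on one hand, and $y\mapsto 2(m+n)^2xyx$ into the linearized identity on the other. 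Comparing the two and carrying out the familiar sequence of manipulations --- replace $y$ by $xy$, left-multiply the resulting identity by $x$, subtract, then apply the $y\mapsto yD(x)$-with-right-multiplication-by-$D(x)$ trick, invoking Lemma \ref{new2} each time a relation of the form $aRb+bRa=0$ emerges --- should culminate in $[[D(x),x],x]\,y\,[[D(x),x],x]=0$ for all $x,y\in R$, whence semiprimeness gives $[[D(x),x],x]=0$.

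Once $[D(x),x]=0$ is in hand, $xD(x)=D(x)x$ reduces the defining identity to $(m+n)D(x^2)=2(m+n)D(x)x$, and $(m+n)$-torsion-freeness then yields $D(x^2)=D(x)x+xD(x)$, so $D$ is a Jordan derivation; Cusack's theorem upgrades this to a derivation, and Bre\v{s}ar's commuting-derivation theorem places $D(R)$ inside $Z(R)$. The main obstacle I anticipate is the algebraic bookkeeping needed to force the coefficients $m$, $n$, $m+n$, $m-n$ to line up so that the given torsion restriction $mn(m+n)|m-n|$ exactly suffices; in particular the $|m-n|$-factor, which has no counterpart in the centralizer version (compare the coefficients in Lemma \ref{new1}), should surface precisely when cancelling a residual $(m-n)$-coefficient at some intermediate reduction step before the final semiprime argument.
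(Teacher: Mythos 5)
This statement is not proved in the paper at all: it is imported verbatim from Kosi--Ulbl and Vukman \cite{ul} (their Theorem 1.5) and used as a black box in the proof of Theorem \ref{mnd5}. So there is no in-paper proof to compare yours against; what can be assessed is whether your sketch would actually constitute a proof.

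Your endgame is sound: once $[D(x),x]=0$ is known, the defining identity collapses to $(m+n)D(x^2)=2(m+n)D(x)x=2(m+n)xD(x)$, torsion-freeness gives $D(x^2)=D(x)x+xD(x)$ (equivalently $D(x^2)=2xD(x)$, so $D$ is also a Jordan left derivation), and either Cusack plus the commuting-derivation theorem, or more directly Vukman's result \cite{v'} on Jordan left derivations of $2$-torsion free semiprime rings, finishes the argument. The genuine gap is the core step: you assert that transplanting the manipulations of Theorem \ref{mn3} ``should culminate in'' $[[D(x),x],x]\,y\,[[D(x),x],x]=0$, but this is precisely the entire content of the theorem, and the analogy you invoke is imperfect in a way that matters. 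The computations in the proofs of Theorems \ref{mn3} and \ref{mnd5} rely throughout (``frequently \ldots without mentioning'') on the associated map already being a two-sided centralizer, respectively a central-valued derivation --- that is what makes the $T_0(y)$- and $D(y)$-terms in the two expansions of $T(x^2yx+xyx^2)$ (resp.\ $F(x^2yx+xyx^2)$) line up and cancel, leaving an identity involving only $[T(x),x]$ (resp.\ $[F(x),x]$). In your setting no such auxiliary fact is available, since the good behaviour of $D$ is exactly what is to be proved; the raw comparison will leave uncancelled terms of the shapes $D(y)x^3$, $xD(y)x^2$, $x^2D(y)x$, $x^3D(y)$ with coefficients involving $m-n$, and you have not verified that the subsequent substitutions ($y\mapsto xy$ versus left multiplication by $x$, $y\mapsto yD(x)$ versus right multiplication by $D(x)$, Lemma \ref{new2}) still isolate $[D(x),x]$ cleanly. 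Until that computation is actually carried out (which is what occupies the separate paper \cite{ul}), the proposal is a plausible roadmap rather than a proof.
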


Now, we are in a state to prove the conjecture.
\begin{theorem}
\label{mnd5}
Let $m \geq 1$, $n \geq 1$ be distinc integers, $R$ be a $mn(m+n)|n-m|$-torsion free semiprime ring and $F : R \rightarrow R$ be a generalized $(m, n)$-Jordan derivation. Then $F$ is a derivation which maps $R$ into $Z(R)$.
\end{theorem}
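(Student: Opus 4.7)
The plan is to follow the strategy of Theorem~\ref{mn3} closely, replacing \eqref{eq:mn2}, Lemma~\ref{mn1} and Theorem~\ref{mn2} by their derivation analogues \eqref{eq:mnd2}, Lemma~\ref{mnd1} and Theorem~\ref{mnd3}. First apply Theorem~\ref{mnd3} to the associated $(m,n)$-Jordan derivation $D$: the torsion hypothesis $mn(m+n)|n-m|$ ensures that $D$ is a derivation with $D(R)\subseteq Z(R)$. In particular $[D(x),y]=0$ for all $x,y\in R$, so every $D$-value commutes freely through the calculations to follow.

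Linearizing \eqref{eq:mnd2} by $x\mapsto x+y$ gives $(m+n)F(xy+yx)=2mF(x)y+2mF(y)x+2nxD(y)+2nyD(x)$. Compute $F(x^2yx+xyx^2)$ in two ways, exactly as in \eqref{eq:new3}--\eqref{eq:new4}: substitute $y\mapsto(m+n)(xy+yx)$ in Lemma~\ref{mnd1} and expand the outer $F$ using the linearization; separately, substitute $y\mapsto 2(m+n)^2xyx$ in the linearization and expand the outer $F$ using Lemma~\ref{mnd1}. Equating the two produces a multilinear identity in $F(x)$, $F(y)$, $D(x)$, $D(y)$ and monomials in $x,y$; since the $D$-contributions are central, they organize cleanly.

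Now mimic \eqref{eq:new5}--\eqref{eq:new26a}: replace $y$ by $xy$, left-multiply the original identity by $x$, and subtract (all central $D$-terms cancel because they commute through $x$), obtaining a combination of commutator-sandwiches of the form $[F(x),x]\cdot(\text{word in }x,y)$. Next substitute $y\mapsto yF(x)$, right-multiply by $F(x)$, and subtract; the combination $[F(x),x^2]=[F(x),x]x+x[F(x),x]$ appears and Lemma~\ref{new2} applies. Iterating, together with the torsion assumption on $mn(m+n)|n-m|$, yields $[[F(x),x],x]\,y\,[[F(x),x],x]=0$ for all $x,y\in R$; semiprimeness gives $[[F(x),x],x]=0$, and Theorem~\ref{new3} delivers $[F(x),x]=0$. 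So $F$ is commuting on $R$.

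Put $G:=F-D$; then $[G(x),x]=0$ and, subtracting the defining relations of $F$ and $D$, $(m+n)G(x^2)=2mG(x)x$ with linearization $(m+n)G(xy+yx)=2m(G(x)y+G(y)x)$. Feeding the commuting identity $[G(x),y]=[x,G(y)]$ back into the multilinear identity derived above (its $D$-terms, being central, collapse into the $G$-part via $F-D$) yields, after clearing the $n$ and $|m-n|$ coefficients by torsion freeness, the symmetric relation $xG(y)+yG(x)=0$ for all $x,y\in R$. Setting $y=x$ gives $xG(x)=0$ and commuting yields $G(x)x=0$. Left-multiplying $xG(y)+yG(x)=0$ by $G(x)$ produces $G(x)yG(x)=0$ for all $y$, so semiprimeness forces $G(x)=0$. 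Hence $F=D$, a derivation of $R$ into $Z(R)$. The hardest step is deriving $xG(y)+yG(x)=0$ from commuting: once $[F(x),x]=0$ is imposed, one must re-examine the multilinear identity and verify that the $F$- and $D$-contributions rearrange into the combination $G=F-D$, with the torsion hypothesis calibrated precisely to clear the integer coefficients that arise in this separation.
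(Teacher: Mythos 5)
Your proposal is correct and follows essentially the same route as the paper's own proof: reduce $D$ via Theorem~\ref{mnd3}, compute $F(x^2yx+xyx^2)$ in two ways, extract commutator sandwiches by the ``replace $y$ by $xy$ / left-multiply by $x$'' and ``replace $y$ by $yF(x)$ / right-multiply by $F(x)$'' tricks together with Lemma~\ref{new2}, arrive at $[[F(x),x],x]=0$, invoke Theorem~\ref{new3} to get $[F(x),x]=0$, and finally annihilate $F-D$ by semiprimeness. The step you single out as hardest (deriving $xG(y)+yG(x)=0$ for $G=F-D$) is precisely where the paper is also terse --- it asserts $(F-D)(x)x=x(F-D)(x)=0$ directly from the commuting relation before linearizing and sandwiching --- so your plan matches the published argument step for step.
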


\begin{proof}
Since $F : R \rightarrow R$ be a generalized $(m, n)$-Jordan derivation, so it satisfies \eqref{eq:mnd2} for some $(m, n)$-Jordan derivation $D$. Since $R$ is semiprime, by Theorem \ref{mnd3}, $D$ is a derivation which maps $R$ into $Z(R)$. Hence
\begin{equation}
\label{eq:newd1a}
\begin{aligned}
 &D(x)y=yD(x),\\
 &D(xy) = D(x)y+xD(y)=xD(y)+yD(x)~\text{for all}~x,y\in R.
\end{aligned}
\end{equation}

we frequently use \eqref{eq:newd1a} and $2$-torsion free condition of $R$ without mentioning. Now, from Lemma \ref{mnd1}, $F$ satisfies
\begin{equation}
\label{eq:newd1}
\begin{aligned}
 &(m + n)^2F(xyx) =  m(n - m)F(x)xy + m(m - n)F(y)x^2 + n(n - m)x^2D(y)\\
&+ n(m - n)yxD(x) + m(3m + n)F(x)yx + 4mnxD(y)x+ n(3n + m)xyD(x),\\
&~\text{for all}~x,y\in R.
\end{aligned}
\end{equation}

Replacing $x$ by $x+y$ in \eqref{eq:mnd2}, we have
\begin{equation}
\label{eq:newd2}
\begin{aligned}
 &(m + n)F(xy + yx) =2 mF(x)y + 2mF(y)x + 2nD(xy),~\text{for all}~x,y\in R.
\end{aligned}
\end{equation}

Put $y=(m+n)^2 xyx$ in \eqref{eq:newd2} and applying \eqref{eq:newd1}, we get
\begin{equation}
\label{eq:newd3}
\begin{aligned}
&(m + n)^3F(x^2yx + xyx^2) = 2m(3mn + n^2)F(x)xyx + 2m^2(m - n)F(y)x^3\\
&+ 2mn(5n - m)x^2D(y)x + 2mn(m - n)yxD(x)x + 2m^2(3m + n)F(x)yx^2\\
&+ 2mn(5m - n)xD(y)x^2 + 2mn(3n + m)xyD(x)x + 2mn(n - m)xD(x)xy\\
&+ 2n^2(n - m)x^3D(y) + 2mn(m + 3n)xyxD(x) + 2mn(3m + n)xD(x)yx\\
&+ 2n^2(3n + m)x^2yD(x),~\text{for all}~x,y\in R.
\end{aligned}
\end{equation}

Put $y=(m+n)(xy+yx)$ in \eqref{eq:newd1} and apply \eqref{eq:newd2}, we have
\begin{equation}
\label{eq:newd4}
\begin{aligned}
&(m + n)^3F(x^2yx + xyx^2) = m(m + n)(n - m)F(x)x^2y + 2m(m + n)^2F(x)xyx\\
&+m(5m^2 + 2mn + n^2)F(x)yx^2 + 2m^2(m - n)F(y)x^3 + 2mn(5m - n)xD(y)x^2\\
&+ 2mn(m - n)yD(x)x^2 + 2mn(n - m)x^2D(x)y + 2mn(5n - m)x^2D(y)x\\
&+ 2n^2(n - m)x^3D(y) + n(5n^2 + 2mn + m^2)x^2yD(x) + 2n(m + n)^2xyxD(x)\\
&+ n(m + n)(m - n)yx^2D(x) + 8m^2nxD(x)yx + 8mn^2xyD(x)x,~\text{for all}~x,y\in R.
\end{aligned}
\end{equation}

Comparing \eqref{eq:newd3} and \eqref{eq:newd4}, we get
\begin{equation}
\label{eq:newd5}
\begin{aligned}
&2m^2F(x)xyx - m(m + n)F(x)x^2y - m(m + n)F(x)yx^2 + 2mnyD(x)x^2\\
&- 2mnx^2D(x)y + n(m + n)x^2yD(x) - 2n^2xyxD(x) + n(m + n)yx^2D(x)\\
&- 2mnxyD(x)x + 2mnxD(x)yx - 2mnyxD(x)x + 2mnxD(x)xy = 0,\\
&~\text{for all}~x,y\in R~(\text{since $R$ is $|n-m|$-torsion free}).
\end{aligned}
\end{equation}

Replacing $y$ by $xy$ in \eqref{eq:newd5}, we have
\begin{equation}
\label{eq:newd6}
\begin{aligned}
&2m^2F(x)xxyx - m(m + n)F(x)x^2xy - m(m + n)F(x)xyx^2 + 2mnxyD(x)x^2\\
&- 2mnx^2D(x)xy + n(m + n)x^2xyD(x) - 2n^2xxyxD(x) + n(m + n)xyx^2D(x)\\
&- 2mnxxyD(x)x + 2mnxD(x)xyx - 2mnxyxD(x)x + 2mnxD(x)xxy = 0,\\
&~\text{for all}~x,y\in R.
\end{aligned}
\end{equation}

Multiplying \eqref{eq:newd5} by $x$ from left, we get
\begin{equation}
\label{eq:newd7}
\begin{aligned}
&2m^2xF(x)xyx - m(m + n)xF(x)x^2y - m(m + n)xF(x)yx^2 + 2mnxyD(x)x^2\\
&- 2mnxx^2D(x)y + n(m + n)xx^2yD(x) - 2n^2xxyxD(x) + n(m + n)xyx^2D(x)\\
&- 2mnxxyD(x)x + 2mnxxD(x)yx - 2mnxyxD(x)x + 2mnxxD(x)xy = 0,\\
&~\text{for all}~x,y\in R.
\end{aligned}
\end{equation}

Now, subtract \eqref{eq:newd7} from \eqref{eq:newd6},
\begin{equation}
\label{eq:newd8}
\begin{aligned}
 &2m^2[F(x),x]xyx-m(m+n)[F(x),x]x^2y-m(m+n)[F(x),x]yx^2= 0,\\
 &~\text{for all}~x,y\in R.
\end{aligned}
\end{equation}

Put $y=yF(x)$ in \eqref{eq:newd8}, we have
\begin{equation}
\label{eq:newd9}
\begin{aligned}
&2m^2[F(x),x]xyF(x)x-m(m+n)[F(x),x]x^2yF(x)-m(m+n)[F(x),x]yF(x)x^2= 0,\\
 &~\text{for all}~x,y\in R.
\end{aligned}
\end{equation}

Multiplying \eqref{eq:newd8} by $F(x)$ from right, we get
\begin{equation}
\label{eq:newd10}
\begin{aligned}
 &2m^2[F(x),x]xyxF(x)-m(m+n)[F(x),x]x^2yF(x)-m(m+n)[F(x),x]yx^2F(x)= 0,\\
 &~\text{for all}~x,y\in R.
\end{aligned}
\end{equation}

Again, subtract \eqref{eq:newd10} from \eqref{eq:newd9}, we have
\begin{equation}
\label{eq:newd11}
\begin{aligned}
&2m^2[F(x),x]xy[F(x),x]-m(m+n)[F(x),x]y[F(x),x^2]= 0,~\text{for all}~x,y\in R.
\end{aligned}
\end{equation}

By using Lemma \ref{new2} on \eqref{eq:newd11}, we have
\begin{equation}
\label{eq:newd12}
\begin{aligned}
&(2m^2[F(x),x]x-(m^2+mn)[F(x),x^2])y[F(x),x]=0\\
&\implies (-2m^2x[F(x),x]+(m^2-mn)[F(x),x^2])y[F(x),x]=0 ,~\text{for all}~x,y\in R.
\end{aligned}
\end{equation}

Since $[F(x),x^2]=[F(x),x]x+x[F(x),x]$, \eqref{eq:newd12} reduces to
\begin{equation}
\label{eq:newd13}
\begin{aligned}
&((m+n)x[F(x),x]+(n-m)[F(x),x]x)y[F(x),x]=0,\\
&~\text{for all}~x,y\in R~(\text{since $R$ is $m$-torsion free}).
\end{aligned}
\end{equation}

Putting $y=y(m+n)x$ in \eqref{eq:newd13} and multiplying \eqref{eq:newd13} by $(n-m)x$ from right, and then adding, using the semiprimeness of $R$ we have
\begin{equation}
\label{eq:newd14}
\begin{aligned}
&(m+n)x[F(x),x]+(n-m)[F(x),x]x=0\\
&\implies (n-m)F(x)x^2+2mxF(x)x-(m+n)x^2F(x)=0,~\text{for all}~x\in R.
\end{aligned}
\end{equation}

Putting $x=x+y$ in \eqref{eq:newd14},
\begin{equation}
\label{eq:newd15}
\begin{aligned}
&(n-m)F(x)(xy+yx+y^2)+(n-m)F(y)(x^2+xy+yx)\\
&+2mxF(x)y+2mxF(y)x+2mxF(y)y+2myF(x)x+2myF(x)y+2myF(y)x\\
&-(m+n)[(xy+yx+y^2)F(x)+(x^2+xy+yx)F(y)]=0,~\text{for all}~x,y\in R.
\end{aligned}
\end{equation}

Put $x=-x$ in \eqref{eq:newd15} and then adding it to \eqref{eq:newd15}, we have
\begin{equation}
\label{eq:newd16}
\begin{aligned}
&(n-m)F(x)(xy+yx)+(n-m)F(y)x^2+2mxF(x)y+2mxF(y)x+2myF(x)x\\
&-(m+n)[(xy+yx)F(x)+x^2F(y)]=0~\text{for all}~x,y\in R.
\end{aligned}
\end{equation}

Put $y=(m+n)(xy+yx)$ in \eqref{eq:newd16} and using \eqref{eq:newd2},
\begin{equation*}
\begin{aligned}
&(n-m)(m+n)F(x)[x^2y+2xyx+yx^2]\\
&+(n-m)[2mF(x)y+2mF(y)x+2nD(xy)]x^2+2m(m+n)xF(x)(xy+yx)\\
&+2mx[2mF(x)y+2mF(y)x+2nD(xy)]x+2m(m+n)(xy+yx)F(x)x\\
&-(m+n)^2[x^2y+2xyx+yx^2]F(x)-(m+n)x^2[2mF(x)y+2mF(y)x+2nD(xy)]\\
&=0,~\text{for all}~x,y\in R.
\end{aligned}
\end{equation*}

Now, using \eqref{eq:newd14}, we have
\begin{equation*}
\begin{aligned}
&(n-m)(m+n)[x^2F(x)y+2F(x)xyx]
+(n-m)(3m+n)F(x)yx^2\\
&+2(n-m)[mF(y)x^3+nD(xy)x^2]
+2m(3m+n)xF(x)yx+4m^2xF(y)x^2\\
&+4mnxD(xy)x
+(m+n)[2mxyF(x)x-(n-m)yF(x)x^2
-(m+n)x^2yF(x)]\\
&-2(m+n)[(m+n)xyxF(x)-mx^2F(y)x]-2n(m+n)x^2D(xy)\\
&=0,~\text{for all}~x,y\in R.
\end{aligned}
\end{equation*}

Also, applying \eqref{eq:newd16},
\begin{equation}
\begin{aligned}
\label{eq:newd17}
&(n-m)[(m+n)x^2F(x)y+2nF(x)xyx+(m+n)F(x)yx^2]\\
&+2m(m+n)xF(x)yx+4m(m+n)xyF(x)x-(3m^2+n^2)yF(x)x^2\\
&-(m+n)^2x^2yF(x)+2m(m+n)yxF(x)x-2(m+n)^2xyxF(x)\\
&+2n(n-m)D(xy)x^2+4mnxD(xy)x-2n(m+n)x^2D(xy)=0,~\text{for all}~x,y\in R.
\end{aligned}
\end{equation}

Putting $y=xy$ in \eqref{eq:newd17} and also multiplying \eqref{eq:newd17} by $x$ from left and after subtracting these new equations, we get
\begin{equation}
\label{eq:newd18}
\begin{aligned}
&(n-m)(m+n)x^2[F(x),x]y+2n(n-m)[F(x),x]xyx\\
&+(n-m)(m+n)[F(x),x]yx^2+2m(m+n)x[F(x),x]yx=0,~\text{for all}~x,y\in R.
\end{aligned}
\end{equation}

Again, replacing $y$ by $yF(x)$ in \eqref{eq:newd18} and multiplying \eqref{eq:newd18} by $F(x)$ from right, and finally subtracting these obtained relations, we get
\begin{equation*}
\begin{aligned}
&2n(n-m)[F(x),x]xy[F(x),x]+(n-m)(m+n)[F(x),x]y[F(x),x^2]\\
&+2m(m+n)x[F(x),x]y[F(x),x]=0,~\text{for all}~x,y\in R.
\end{aligned}
\end{equation*}

Using \eqref{eq:newd14},
\begin{equation*}
\begin{aligned}
&2(n-m)^2[F(x),x]xy[F(x),x]+(n-m)(m+n)[F(x),x]y[F(x),x^2]=0\\
&~\text{for all}~x,y\in R.
\end{aligned}
\end{equation*}

Now, since
\begin{equation*}
\begin{aligned}
(m+n)[F(x),x^2]&=2m[F(x),x]x+((m+n)x[F(x),x]+(n-m)[F(x),x]x)\\
&=2m[F(x),x]x,~\text{for all}~x\in R~(\text{by} ~\eqref{eq:newd14}),
\end{aligned}
\end{equation*}

we have
\begin{equation*}
\begin{aligned}
&2(n-m)^2[F(x),x]xy[F(x),x]+2m(n-m)[F(x),x]y[F(x),x]x=0\\
&~\text{for all}~x,y\in R.
\end{aligned}
\end{equation*}

By Lemma \ref{new2},
\begin{equation}
\label{eq:newd19}
\begin{aligned}
&[F(x),x]xy[F(x),x]=0,~\text{for all}~x,y\in R~(\text{since $R$ is $2n|n-m|$-torsion free}).
\end{aligned}
\end{equation}

Using \eqref{eq:newd14} in \eqref{eq:newd19},
\begin{equation}
\label{eq:newd20}
\begin{aligned}
&x[F(x),x]y[F(x),x]=0,~\text{for all}~x,y\in R~(\text{since $R$ is $(m+n)$-torsion free}).
\end{aligned}
\end{equation}

Subtracting \eqref{eq:newd20} from \eqref{eq:newd19}, we get
\begin{equation}
\label{eq:newd21}
\begin{aligned}
&[[F(x),x],x]y[F(x),x]=0,~\text{for all}~x,y\in R.
\end{aligned}
\end{equation}

Finally, multiplying \eqref{eq:newd21} by $x$ from right and putting $y=yx$ in \eqref{eq:newd21}, and then subtracting these two, we get
\begin{equation}
\label{eq:newd22}
\begin{aligned}
&[[F(x),x],x]y[[F(x),x],x]=0,~\text{for all}~x,y\in R\\
& \implies [[F(x),x],x]=0,~\text{for all}~x\in R~\text{(since $R$ is semiprime)}.
\end{aligned}
\end{equation}

By Theorem \ref{new3},
\begin{equation}
\label{eq:newd23}
\begin{aligned}
&[F(x),x]=0\\
& \implies F(x)x=xF(x),~\text{for all}~x\in R~\text{(since $R$ is semiprime)}.
\end{aligned}
\end{equation}

Let $\mathcal{F}=F-D$. Then by \eqref{eq:newd23}, $\mathcal{F}$ is an additive mapping satisfying
\begin{equation}
\label{eq:newd24}
\begin{aligned}
\mathcal{F}(x)x=x\mathcal{F}(x)=0,~\text{for all}~x\in R.
\end{aligned}
\end{equation}

Taking $x=x+y$ in \eqref{eq:newd24},
\begin{equation}
\label{eq:newd25}
\begin{aligned}
\mathcal{F}(x)y+\mathcal{F}(y)x=0,~\text{for all}~x,y\in R.
\end{aligned}
\end{equation}

Multiplying \eqref{eq:newd25} by $\mathcal{F}(x)$ from right,
\begin{equation}
\label{eq:newd26}
\begin{aligned}
&\mathcal{F}(x)y\mathcal{F}(x)=0 ~\text{for all}~x,y\in R \\
&\implies \mathcal{F}(x)=0,~\text{for all}~x\in R~\text{(since $R$ is semiprime)}.
\end{aligned}
\end{equation}

Hence $F=D$. Thus, $F$ is a derivation which maps $R$ into $Z(R)$.

\end{proof}

\section*{Acknowledgement}
The authors are thankful to DST, Govt. of India for financial support and Indian Institute of Technology Patna for providing the research facilities.

\end{document}